\numberwithin{equation}{section} 
\theoremstyle{plain}
\newtheorem{theorem}{Theorem}[section]
\newtheorem{corollary}[theorem]{Corollary}
\newtheorem{lemma}[theorem]{Lemma}
\theoremstyle{definition}
\newtheorem{definition}{Definition}[section]
\theoremstyle{remark}
\begin{document}

\title{\bf The impact of the limit $q$-Durrmeyer operator on continuous functions }
\author{\"{O}vg\"{u} G\"{u}rel Y\i lmaz$^1$, Sofiya Ostrovska$^2$, Mehmet Turan$^2$}
\date{\today}
\maketitle

\begin{center}
{\it $^1$Recep Tayyip Erdogan University, Department of Mathematics, 53100, Rize, Turkey}\\
{\it $^2$Atilim University, Department of Mathematics,  Incek  06830, Ankara, Turkey}\\ 
{\it e-mail: ovgu.gurelyilmaz@erdogan.edu.tr, sofia.ostrovska@atilim.edu.tr, mehmet.turan@atilim.edu.tr}\\
{\it Tel: +90 312 586 8211,  Fax: +90 312 586 8091}
\end{center}

\begin{abstract}
The limit $q$-Durrmeyer operator, $D_{\infty,q},$ was introduced and its approximation properties were investigated by V. Gupta in 2008 during a study of $q$-analogues for the Bernstein-Durrmeyer operator. In the present work, this operator is investigated from a different perspective. More precisely, the growth estimates are derived for the entire functions comprising the range of $D_{\infty,q}$. The interrelation between the analytic properties of a function $f$ and the rate of growth for $D_{\infty,q}f$ are established, and the sharpness of the obtained results are demonstrated.
\end{abstract}

{\bf Keywords}: $q$-Durrmeyer operator, analytic function, entire function, growth estimates 

{\bf 2020 MSC:} 30D15, 30B40, 47B38.

\section{Introduction}
The significant influence of the Bernstein polynomials on modern mathematics - not only theoretical, but also applied and computational - brought about the emergence of its numerous versions and modifications. See, for example, \cite{boehm, bustamante, lorentz}. While the Bernstein polynomials serve to approximate the continuous functions on $[0, 1]$, the Kantorovich polynomials constructed with respect to the Bernstein basis are applicable for the approximation of integrable functions. Kantorovich's breakthrough idea was further developed by Durrmeyer \cite{durrmeyer} and Derriennic \cite{derriennic}. The latter proved that the Bernstein-Durrmeyer polynomials approximate functions in $L_1[0, 1]$, and also generate self-adjoint operators in $L_2[0, 1]$.

With the increasing role of the $q$-Calculus (see, e.g. \cite{askey,castellani,chung,rima}), the $q$-analogues of various Bernstein-type operators have come to the fore. The reader is referred to \cite{bustamante, gal, phillips}. New versions of these operators, targeting a wide spectrum of various problems, are continuously coming out.

In 2008, V. Gupta \cite{gupta} introduced a simple $q$-analogue of the Bernstein-Durrmeyer operators, denoted by $D_{n,q},$ and studied its approximation properties. One of the properties that he proved was that $\{D_{n,q}\}$ converges to the limit operator $D_{\infty,q}$  in the strong operator topology on $C[0,1]$. More results on the $q$-Durrmeyer operator have been obtained in \cite{wang, mahmudov}.

In the present work, further investigation is carried out concerning the limit $q$-Bernstein-Durrmeyer operator. Distinct from the preceding studies on the subject, this paper is focused on the analytic properties that the image of $f \in C[0,1]$ possesses under the operator $D_{\infty,q}$. Here, it is proved that, for each $ f \in C[0,1]$, the function  $D_{\infty,q}f$ admits an analytic continuation from $[0,1]$ to the whole complex plane $\mathbb{C}$. The growth estimates of the entire function $D_{\infty,q}f$ are provided, along with the interconnection between the growth of $D_{\infty,q}f$ and the behaviour of $f$. The sharpness of the obtained results is demonstrated.

To present the results, let us recollect the necessary notation and definitions.
The $q$-Pochhammer symbol denotes, for each $a \in \mathbb{C}$,
\begin{align*}
(a;q)_0:=1, \qquad (a;q)_n=\prod_{j=0}^{n-1}(1-aq^j), \qquad (a;q)_\infty=\prod_{j=0}^{\infty}(1-aq^j).
\end{align*}
The Euler Identities
\begin{align}
(z;q)_\infty=\sum_{k=0}^{\infty}\frac{ (-1)^k q^{k(k-1)/2}}{ (q;q)_k}z^k, \quad \lvert q \rvert < 1, \label{euler}
\end{align}
and 
\begin{align}
\frac{1}{(z;q)_\infty}=\sum_{k=0}^{\infty} \frac{z^k}{(q;q)_k}, \quad |q|<1, \quad |z|<1, \label{euler1}
\end{align}
will be used permanently. See \cite[Chapter 10, Corollary 10.2.2]{askey}.

The $q$-integral over an interval $[0,a]$, first introduced by Thomae \cite{thomae} and later by Jackson \cite{jackson}, is defined as 
\begin{align}
\int_{0}^{a} f(t) \,d_qt:= (1-q)a \sum_{j=0}^{\infty} q^j f(aq^j).  \label{qint}
\end{align}

\begin{definition} \cite{gupta} Let $q \in (0,1)$, $f \in C[0,1]$. {\it The limit $q$-Durrmeyer operator} is defined by 
\begin{align*} 
(D_{\infty,q}f)(x):=D_{\infty,q}(f;x)= 
\begin{cases}\sum_{k=0}^{\infty} A_{\infty k}(f) p_{\infty k}(q;x),  & x \in [0,1), \\
f(1), & x= 1. 
\end{cases}
\end{align*}
where 
\begin{align}
A_{\infty k}(f):= \frac{q^{-k}}{1-q} \int_{0}^{1} f(t) p_{\infty k}(q;qt)  \,d_qt, \quad k=0,1,\ldots, \label{limqdurr}
\end{align}
and 
\begin{align}
p_{\infty k}(q;x)=\frac{(x;q)_\infty \, x^k}{(q;q)_k}, \quad k=0,\ldots . \label{pk}
\end{align}
\end{definition}
As coefficients \eqref{limqdurr} form a bounded sequence whatever $f\in C[0, 1]$ is, the function $D_{\infty, q}f$ admits an analytic continuation from $[0, 1]$ to open disc $\{z:|z| < 1\}.$
Taking into account \eqref{qint}, $A_{\infty k}(f)$ can also be expressed as 
\begin{align} 
A_{\infty k}(f)= \frac{(q;q)_\infty}{(q;q)_k} \sum_{j=0}^{\infty} \frac{f(q^j) q^{(k+1)j}}{(q;q)_j}. \label{Akf}
	\end{align}

Throughout the paper, letter C - with or without subscripts - denotes a positive constant whose specific value is of no importance. 
Subscripts, when used, indicate the dependence of $C$ on certain parameters. It should be pointed out that the same letter may stand for different values. Moreover, if $f$ is analytic in the closed disc $\Delta_r:=\{z: |z| \leqslant r\},$ the notation 
\begin{align*}
M(r;f):=\max_{z\in\Delta_r} |f(z)|
\end{align*}
will be employed. 

The article is organized as follows: In Section 2, the main outcomes are stated, while Section 3 contains the auxiliary technical lemmas. Finally, the proofs of the main results appear in Section 4.

\section{Statement of Results}
\begin{theorem} \label{ancont}
For each $f \in C[0,1]$, the function $(D_{\infty,q}f)(x)$ admits an analytic continuation from $[0,1]$ as an entire function given by 
\begin{align}
(D_{\infty,q}f)(z)=\sum_{j=0}^{\infty} \frac{f(q^j)q^j}{(q;q)_j} \sum_{n=0}^{\infty} \frac{(-1)^n q^{n(n+1)/2}}{(q;q)_n} (z;q)_{n+j}. \label{analy}
\end{align}
\end{theorem}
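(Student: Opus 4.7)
The plan is to prove \eqref{analy} first on $[0,1)$, where the defining series of $D_{\infty,q}f$ converges, and then verify that its right-hand side extends as an entire function of $z\in\mathbb{C}$. Starting from $(D_{\infty,q}f)(x)=(x;q)_\infty\sum_{k=0}^\infty A_{\infty k}(f)\,x^k/(q;q)_k$ and inserting formula \eqref{Akf}, I obtain a double series in $k$ and $j$ whose general term is
\begin{align*}
\frac{(x;q)_\infty\,(q;q)_\infty\,f(q^j)\,q^{(k+1)j}\,x^k}{((q;q)_k)^2\,(q;q)_j}.
\end{align*}
Using the bounds $(q;q)_k,(q;q)_j\ge(q;q)_\infty$ together with the elementary estimate $\sum_{k,j}q^{(k+1)j}|x|^k\le\frac{1}{(1-q)(1-|x|)}$, this double series is absolutely convergent for $x\in[0,1)$, so Fubini permits rearranging it as $\sum_j \frac{f(q^j)q^j}{(q;q)_j}\,S_j(x)$, where $S_j(x):=(x;q)_\infty(q;q)_\infty\sum_{k=0}^\infty (xq^j)^k/((q;q)_k)^2$.

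The heart of the argument is the identification of $S_j(x)$ with the inner series of \eqref{analy}. Writing $(q;q)_\infty/(q;q)_k=(q^{k+1};q)_\infty$ yields
\begin{align*}
S_j(x)=(x;q)_\infty\sum_{k=0}^\infty\frac{(xq^j)^k(q^{k+1};q)_\infty}{(q;q)_k},
\end{align*}
and expanding $(q^{k+1};q)_\infty$ by the Euler identity \eqref{euler} produces $\sum_n\frac{(-1)^n q^{n(n+1)/2}}{(q;q)_n}q^{nk}$. After a second (absolute-convergence-based) interchange with the $k$-sum, identity \eqref{euler1} collapses $\sum_k(xq^{j+n})^k/(q;q)_k$ to $1/(xq^{j+n};q)_\infty$. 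Since $(x;q)_\infty/(xq^{j+n};q)_\infty=(x;q)_{n+j}$, I obtain $S_j(x)=\sum_{n=0}^\infty\frac{(-1)^n q^{n(n+1)/2}}{(q;q)_n}(x;q)_{n+j}$, and therefore \eqref{analy} holds pointwise on $[0,1)$.

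To finish, I would verify that the right-hand side of \eqref{analy} represents an entire function $F(z)$. For $|z|\le R$ one has $|(z;q)_{n+j}|\le(-R;q)_\infty$, while $q^{n(n+1)/2}/(q;q)_n$ decays super-exponentially in $n$ and $q^j/(q;q)_j$ geometrically in $j$; hence the double series converges uniformly on every closed disc $\Delta_R$, so $F$ is entire. Since $F(x)=(D_{\infty,q}f)(x)$ on $[0,1)$ and $F(1)=f(1)$ (only the $n=j=0$ term survives, because $(1;q)_m=0$ for $m\ge 1$), the function $F$ provides the sought analytic continuation. The main technical obstacle is the double interchange of summations that underlies the evaluation of $S_j$; each instance is settled by an absolute-convergence estimate of the type given above, after which the remainder of the proof is a clean chain of applications of the two Euler identities \eqref{euler} and \eqref{euler1}.
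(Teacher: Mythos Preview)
Your proof is correct and follows essentially the same route as the paper: the paper packages your computation of $S_j$ as a separate lemma about the auxiliary function $\tau(z)=(z;q)_\infty\sum_{k\ge0}z^k/(q;q)_k^2$, but the chain of Euler-identity manipulations and the final uniform bound $|(z;q)_{n+j}|\le(-R;q)_\infty$ are identical. Your explicit verification that $F(1)=f(1)$ is a small extra detail the paper does not spell out.
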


The proof of Theorem \ref{ancont} presented in Section \ref{sec:pm} yields, apart from \eqref{analy}, the following corollary:

\begin{corollary} \label{est}
The growth of $D_{\infty,q}f$, for each $f \in C[0,1]$, enjoys the following estimate:
\begin{align}
M(r;D_{\infty,q}f)=O((-r;q)_{\infty}), \quad r \to \infty. \label{growth}
\end{align}
\end{corollary}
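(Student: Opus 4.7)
The plan is to start directly from the explicit series representation \eqref{analy} provided by Theorem \ref{ancont} and bound it termwise, using that on $|z|\leqslant r$ the $q$-Pochhammer factors admit the elementary estimate
\begin{align*}
|(z;q)_{n+j}|=\prod_{k=0}^{n+j-1}|1-zq^k|\leqslant \prod_{k=0}^{n+j-1}(1+rq^k)=(-r;q)_{n+j}\leqslant (-r;q)_\infty.
\end{align*}
Since $f\in C[0,1]$ is bounded, write $\|f\|:=\max_{[0,1]}|f|$. Combining these with the triangle inequality applied to \eqref{analy} yields
\begin{align*}
M(r;D_{\infty,q}f)\leqslant \|f\|\,(-r;q)_\infty \sum_{j=0}^\infty\frac{q^j}{(q;q)_j}\sum_{n=0}^\infty\frac{q^{n(n+1)/2}}{(q;q)_n}.
\end{align*}

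The next step is to identify the two numerical series in closed form via the Euler identities. For the inner sum, substitute $z=-q$ in \eqref{euler}:
\begin{align*}
(-q;q)_\infty=\sum_{n=0}^\infty\frac{(-1)^n q^{n(n-1)/2}}{(q;q)_n}(-q)^n=\sum_{n=0}^\infty\frac{q^{n(n+1)/2}}{(q;q)_n}.
\end{align*}
For the outer sum, apply \eqref{euler1} with $z=q$ (permissible because $|q|<1$) to get
\begin{align*}
\sum_{j=0}^\infty\frac{q^j}{(q;q)_j}=\frac{1}{(q;q)_\infty}.
\end{align*}
Both are finite constants depending only on $q$, so inserting them gives
\begin{align*}
M(r;D_{\infty,q}f)\leqslant \frac{\|f\|\,(-q;q)_\infty}{(q;q)_\infty}\,(-r;q)_\infty=C_{f,q}\,(-r;q)_\infty,
\end{align*}
which is exactly \eqref{growth}.

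There is no genuine obstacle here: the entire proof is essentially a triangle-inequality estimate on \eqref{analy}, and the only nontrivial (yet routine) ingredient is recognising that the two numerical double-sum factors collapse to $(-q;q)_\infty$ and $1/(q;q)_\infty$ via the Euler identities \eqref{euler} and \eqref{euler1}. The termwise majorisation by $(-r;q)_\infty$ is legitimate because the double series in \eqref{analy} is absolutely convergent (as is visible from the same closed-form evaluation applied to $|z|\leqslant r$), so no interchange of summation needs justification beyond Tonelli's theorem for nonnegative terms.
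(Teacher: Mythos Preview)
Your proof is correct and is essentially identical to the paper's own argument, which appears inside the proof of Theorem~\ref{ancont}: the paper likewise bounds $|(z;q)_{j+n}|\leqslant(-R;q)_\infty$ on $\Delta_R$, evaluates the inner $n$-sum as $(-q;q)_\infty$ via \eqref{euler}, bounds the outer $j$-sum by $\|f\|_{C[0,1]}/(q;q)_\infty$ via \eqref{euler1}, and arrives at the very same constant $C_{f,q}=\|f\|\,(-q;q)_\infty/(q;q)_\infty$.
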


It is worth pointing out that coefficients \eqref{Akf} can be viewed as the values of the function $g(z):=(qz;q)_\infty \: \rho(z)$ at points $z=q^k$, $k=0,1, \ldots$, where 
\begin{align}
\rho(z)=\sum_{j=0}^{\infty} \frac{f(q^j) q^j}{(q;q)_j}z^j. \label{ro}
\end{align}
Since $(qz;q)_\infty$ is entire and the series converges in the disc $\{z: |z|<1/q \}$ for any $f \in C[0,1]$, it follows that $g$ is analytic in that disc. Clearly, the radius of convergence for $\rho$ can be greater than $1/q$. The representation below of $D_{\infty,q}$ with the help of divided differences of $g$ is important.

\begin{theorem} \label{divi}
Given $f \in C[0,1]$, let $g(z)=(qz;q)_\infty \: \rho(z)$, where $\rho$ is defined by \eqref{ro}. Then,
\begin{align*}
(D_{\infty,q}f)(z)=\sum_{k=0}^{\infty} (-1)^k q^{k(k-1)/2} \: g[1;q; \ldots; q^k]z^k, \quad z \in \mathbb{C}.
\end{align*}
Here, $g[x_0;\ldots; x_k]$ stands for the divided difference of $g$ at the distinct nodes $x_0,\ldots, x_k$.
\end{theorem}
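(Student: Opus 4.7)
The strategy is to show that the coefficients $A_{\infty k}(f)$ appearing in the definition of $D_{\infty,q}f$ are precisely the sample values of $g$ on the geometric sequence $\{q^k\}$, after which the theorem becomes a Newton-type interpolation identity for $g$ that can be verified term-by-term with Euler's identity \eqref{euler}.

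First, I would observe that evaluating $g(z)=(qz;q)_\infty\,\rho(z)$ at $z=q^k$ gives
\begin{align*}
g(q^k)=(q^{k+1};q)_\infty\sum_{j=0}^\infty\frac{f(q^j)q^{(k+1)j}}{(q;q)_j}
=\frac{(q;q)_\infty}{(q;q)_k}\sum_{j=0}^\infty\frac{f(q^j)q^{(k+1)j}}{(q;q)_j},
\end{align*}
which is precisely $A_{\infty k}(f)$ by \eqref{Akf}. This reduces the claim to showing that, for any function $g$ analytic in a neighborhood of $[0,1]$ containing the nodes $\{q^k\}$, one has the expansion
\begin{align*}
(z;q)_\infty\sum_{k=0}^\infty\frac{g(q^k)z^k}{(q;q)_k}=\sum_{k=0}^\infty(-1)^kq^{k(k-1)/2}\,g[1;q;\ldots;q^k]\,z^k.
\end{align*}

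Next, I would expand the divided difference by the Lagrange formula at the geometric nodes and compute the products in closed form. For $0\le i\le k$, splitting $\prod_{j\ne i}(q^i-q^j)$ into $j<i$ and $j>i$ and pulling out $q^j$ or $q^i$ gives, after a short calculation with telescoping exponents,
\begin{align*}
\prod_{\substack{j=0\\j\ne i}}^k(q^i-q^j)=(-1)^i q^{i(2k-i-1)/2}(q;q)_i(q;q)_{k-i},
\end{align*}
so that
\begin{align*}
g[1;q;\ldots;q^k]=\sum_{i=0}^k\frac{(-1)^i g(q^i)}{q^{i(2k-i-1)/2}(q;q)_i(q;q)_{k-i}}.
\end{align*}

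Substituting this into the right-hand side, interchanging the order of summation, and setting $n=k-i$, the exponent of $q$ in each term simplifies via the identity $\binom{n+i}{2}-i\,\binom{2n+i-1}{2}/(\cdot)$ — concretely, $(n+i)(n+i-1)/2 - i(2n+i-1)/2 = n(n-1)/2$. The sum over $i$ then factors out and the inner sum over $n$ becomes $\sum_{n\ge 0}(-1)^nq^{n(n-1)/2}z^n/(q;q)_n=(z;q)_\infty$ by Euler's identity \eqref{euler}. What remains is exactly $(z;q)_\infty\sum_{i\ge 0}g(q^i)z^i/(q;q)_i$, matching the left-hand side.

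The main obstacle is the justification of the interchange of the double summation and the verification that both sides define the same entire function on all of $\mathbb{C}$. For the former, I would invoke absolute convergence, bounding $|g(q^i)|$ uniformly (which follows since $g$ is analytic on $|z|<1/q$ and the $q^i$ lie in a compact subset) and comparing the remaining $q$-series to a convergent $q$-exponential majorant. For the latter, both sides are entire (the left-hand side by Theorem \ref{ancont}, the right-hand side because the coefficients of $z^k$ are bounded combinations of the values $g(q^i)$ weighted by $q^{k(k-1)/2}$, which forces the series to have infinite radius of convergence); since they agree as formal power series, the identity extends to $z\in\mathbb{C}$.
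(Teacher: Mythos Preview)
Your proof is correct and follows essentially the same route as the paper's: both identify $A_{\infty k}(f)=g(q^k)$, compute the divided difference at the geometric nodes $1,q,\ldots,q^k$ in closed form, and match it against the Cauchy product arising from Euler's identity~\eqref{euler}, with the extension to all of $\mathbb{C}$ supplied by Theorem~\ref{ancont}. The only difference is directional---the paper expands $(z;q)_\infty$ first and then recognizes the divided difference in the resulting coefficient of $z^k$, whereas you start from the divided-difference side and collapse the double sum back to $(z;q)_\infty$---but the underlying computation is identical.
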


This representation allows us to not only refine the estimate of Corollary \ref{est}, but also establish a connection between the behaviour of $f$ and the growth of its image under $D_{\infty,q}$.

\begin{theorem} \label{estimate}
Let $R>1$ be such that $\rho$ is analytic in $\Delta_R$. Then, 
\begin{align*}
M(r;D_{\infty,q}f)=o \left( \frac {(-r;q)_{\infty}} {r^{\lambda}} \right), \quad r \to \infty,
\end{align*}
for every $\lambda<(\ln R)/\ln(1/q)$.
\end{theorem}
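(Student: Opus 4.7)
The plan is to exploit the divided-difference representation of $D_{\infty,q}f$ furnished by Theorem \ref{divi}. Since $\rho$ is analytic in $\Delta_R$ and $(qz;q)_\infty$ is entire, the product $g(z)=(qz;q)_\infty\,\rho(z)$ is analytic in $\Delta_R$ as well. Writing $(D_{\infty,q}f)(z)=\sum_{k\geq 0} a_k z^k$ with $a_k=(-1)^k q^{k(k-1)/2}\,g[1;q;\ldots;q^k]$, I intend to bound $|a_k|$ sharply, sum the series to control $M(r;D_{\infty,q}f)$ by a shifted $q$-Pochhammer product, and finally compare that product asymptotically with $(-r;q)_\infty$.

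For the coefficient bound I would use the Hermite (contour) representation of the divided difference: for any $R'\in(1,R)$,
\[
g[1;q;\ldots;q^k]=\frac{1}{2\pi i}\oint_{|\zeta|=R'}\frac{g(\zeta)}{\prod_{j=0}^{k}(\zeta-q^j)}\,d\zeta.
\]
On the circle $|\zeta|=R'$ the inequality $|\zeta-q^j|\geq R'-q^j$ gives
\[
\left|\prod_{j=0}^{k}(\zeta-q^j)\right|\geq R'^{\,k+1}\,(1/R';q)_{\infty},
\]
a strictly positive lower bound independent of $\zeta$ and $k$; hence $|g[1;q;\ldots;q^k]|\leq C_{R'}/R'^{\,k}$. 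Inserting this into Theorem \ref{divi}, using $(q;q)_k\leq 1$, and invoking the Euler identity \eqref{euler} (with $z$ replaced by $-w$) in the form $\sum_{k\geq 0} q^{k(k-1)/2}w^k\leq(-w;q)_\infty$ for $w\geq 0$, I would obtain $M(r;D_{\infty,q}f)\leq C_{R'}(-r/R';q)_\infty$ for every $r>0$.

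The crux is then the asymptotic comparison of two $q$-Pochhammer products. Given $\lambda<\log R/\log(1/q)$, I would pick $R'\in(q^{-\lambda},R)$ so that $\mu:=\log R'/\log(1/q)$ strictly exceeds $\lambda$, and then prove
\[
\frac{(-r/R';q)_\infty}{(-r;q)_\infty}=O(r^{-\mu}),\qquad r\to\infty.
\]
The idea is to take logarithms and split the sum at the transition index $N:=\lceil\log r/\log(1/q)\rceil$, where $rq^N\asymp 1$. For $j<N$, on which $rq^j\geq 1$, the identity
\[
\log\frac{1+rq^j/R'}{1+rq^j}=-\log R'+\log\!\left(1+\frac{R'-1}{1+rq^j}\right)
\]
together with the geometric-type estimate $\sum_{j<N}(rq^j)^{-1}=O(1)$ yields a contribution of $-N\log R'+O(1)$. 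For $j\geq N$, on which $rq^j\leq 1$, the elementary bound $|\log(1+rq^j)-\log(1+rq^j/R')|\leq(1-1/R')\,rq^j$ combined with $\sum_{j\geq N}rq^j\leq 1/(1-q)$ forces the tail to contribute $O(1)$. Since $N=\log r/\log(1/q)+O(1)$, exponentiation produces the claimed $O(r^{-\mu})$.

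Putting the pieces together,
\[
\frac{M(r;D_{\infty,q}f)\,r^\lambda}{(-r;q)_\infty}\leq C\,r^{\lambda-\mu}\longrightarrow 0,
\]
which is precisely the asserted $o$-estimate. The principal obstacle is the Pochhammer-ratio asymptotics in the third paragraph: no closed form is available for $(-r;q)_\infty$, so the estimate rests on a delicate partitioning of the logarithmic series at the transition $rq^j\approx 1$, arranged so that both the ``large-$rq^j$'' and the ``small-$rq^j$'' blocks contribute only $O(1)$ errors rather than $O(\log r)$, since an $O(\log r)$ error would spoil the polynomial factor $r^{-\mu}$.
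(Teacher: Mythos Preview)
Your proposal is correct and follows essentially the same route as the paper: both invoke Theorem~\ref{divi}, bound the divided differences via the Hermite contour formula (the paper packages this as Lemma~\ref{growdivi}, with a slightly clumsier two-case estimate in place of your uniform lower bound $R'^{\,k+1}(1/R';q)_\infty$), sum against Euler's identity to reach $M(r;D_{\infty,q}f)\leq C\,(-q^{\mu}r;q)_\infty$ with $\mu>\lambda$, and then compare $(-q^{\mu}r;q)_\infty$ with $r^{-\mu}(-r;q)_\infty$. The only substantive difference is in this last comparison: the paper quotes a two-sided growth estimate for $(-r;q)_\infty$ due to Zeng--Zhang to obtain \eqref{zz2}, whereas you derive the ratio asymptotics directly by splitting $\sum_j\log\frac{1+rq^j/R'}{1+rq^j}$ at the transition index $N\approx\log r/\log(1/q)$; your argument is self-contained and your control of both blocks to $O(1)$ is correct, so the two proofs are equivalent in strength.
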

As a consequence of Theorem \ref{estimate}, the crude estimate \eqref{growth} can be improved. Since $\rho$ is analytic in $\{z:|z|<1/q\},$ it is possible to assume $\lambda=0$ in Theorem \ref{estimate} and obtain the following result.

\begin{corollary}\label{cor1}
For any $f \in C[0,1]$,
\begin{align*}
M(r;D_{\infty,q}f)=o((-r;q)_{\infty}), \quad r \to \infty.
\end{align*}
\end{corollary}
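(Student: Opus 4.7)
The plan is immediate once Theorem~\ref{estimate} is granted: I only need to verify that the function $\rho$ from \eqref{ro} fulfills the hypothesis of that theorem with some $R>1$, and then apply it with $\lambda=0$.

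First, I would check that, for every $f\in C[0,1]$, the power series defining $\rho$ converges in the open disc $\{z:|z|<1/q\}$. Since $\{q^j:j\geqslant 0\}\subset[0,1]$, the values $f(q^j)$ are bounded by $\|f\|_\infty$, so the $j$th coefficient of $\rho$ is majorized in absolute value by $\|f\|_\infty\,q^j/(q;q)_j$. For $|z|<1/q$, the Euler identity \eqref{euler1} then yields
\begin{align*}
|\rho(z)|\leqslant \|f\|_\infty\sum_{j=0}^{\infty}\frac{(q|z|)^j}{(q;q)_j}=\frac{\|f\|_\infty}{(q|z|;q)_\infty}<\infty,
\end{align*}
which shows $\rho$ is analytic in $\{z:|z|<1/q\}$.

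Second, because $1/q>1$, I can select any $R$ with $1<R<1/q$; for this $R$, $\rho$ is analytic in $\Delta_R$ and $(\ln R)/\ln(1/q)>0$, so $\lambda=0$ is a permissible choice in Theorem~\ref{estimate}. Applying that theorem gives
\begin{align*}
M(r;D_{\infty,q}f)=o\!\left(\frac{(-r;q)_\infty}{r^{0}}\right)=o((-r;q)_\infty),\qquad r\to\infty,
\end{align*}
as required. There is no real obstacle: the whole content of the corollary is the structural observation that for every $f\in C[0,1]$, the radius of convergence of $\rho$ is uniformly bounded below by $1/q>1$, so the weakest, $\lambda=0$ instance of Theorem~\ref{estimate} is always available.
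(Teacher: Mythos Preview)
Your argument is correct and mirrors the paper's own justification exactly: the paper notes that $\rho$ is analytic in $\{z:|z|<1/q\}$ (you supply the easy verification), so one may take $\lambda=0$ in Theorem~\ref{estimate}. The only minor refinement is that you make explicit the choice of a closed disc $\Delta_R$ with $1<R<1/q$ so as to match the hypothesis of Theorem~\ref{estimate} precisely.
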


\begin{corollary} \label{coroestinf}
If $f(q^j)=O(q^{\alpha j})$, $j \to \infty$, for some $\alpha>0$, then 
\begin{align}
M(r;D_{\infty,q}f)=o(r^{-\lambda}(-r;q)_{\infty}), \quad r \to \infty, \label{estinf}
\end{align}
for all $\lambda< 1+ \alpha.$
\end{corollary}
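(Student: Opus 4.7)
The plan is to deduce this corollary directly from Theorem \ref{estimate} by computing the exact radius of convergence of the power series defining $\rho$ under the hypothesis $f(q^j)=O(q^{\alpha j})$. Once we know $\rho$ is analytic in a disc of radius close to $q^{-(\alpha+1)}$, the quantity $(\ln R)/\ln(1/q)$ in Theorem \ref{estimate} can be made arbitrarily close to $\alpha+1$ from below, which is exactly what is needed.

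First I would examine the coefficients $c_j=f(q^j)q^j/(q;q)_j$ of the series \eqref{ro}. Since $(q;q)_j\searrow (q;q)_\infty>0$, the factor $1/(q;q)_j$ is bounded between positive constants, so the hypothesis $|f(q^j)|\leqslant Cq^{\alpha j}$ yields $|c_j|\leqslant C_1 q^{(\alpha+1)j}$, while the trivial lower bound $(q;q)_j\leqslant 1$ combined with generic choices of $f$ shows this rate cannot be improved in general. Consequently $\limsup_{j\to\infty}|c_j|^{1/j}\leqslant q^{\alpha+1}$, so $\rho$ is analytic in the open disc $\{|z|<q^{-(\alpha+1)}\}$. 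Because $\alpha>0$ and $0<q<1$, the radius $q^{-(\alpha+1)}$ exceeds $1$, so one may choose any $R\in(1,q^{-(\alpha+1)})$ and conclude that $\rho$ is analytic in $\Delta_R$.

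Next, I would fix $\lambda<1+\alpha$ and select $R$ with
\[
q^{-\lambda}<R<q^{-(\alpha+1)},
\]
which is possible since $q^{-\lambda}<q^{-(\alpha+1)}$. For this $R$ one has $\lambda<(\ln R)/\ln(1/q)<\alpha+1$, so Theorem \ref{estimate} yields \eqref{estinf} for this particular $\lambda$. Since $\lambda<1+\alpha$ was arbitrary, the proof is complete.

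I do not expect a genuine obstacle here: the argument is essentially the determination of a radius of convergence via the boundedness of $(q;q)_j$ away from $0$, combined with a direct invocation of Theorem \ref{estimate}. The only mildly delicate point is verifying $R>1$, which is automatic once $\alpha>0$; this is what makes the hypothesis $\alpha>0$ (rather than $\alpha\geqslant 0$) natural, since Theorem \ref{estimate} requires $R>1$ strictly.
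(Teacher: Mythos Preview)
Your proposal is correct and follows exactly the paper's approach: the paper's entire proof is the single observation that under the hypothesis $\rho$ is analytic in $\{z:|z|<q^{-1-\alpha}\}$, after which Theorem~\ref{estimate} applies. Your write-up simply unpacks this by bounding the coefficients $c_j$ via $(q;q)_j\geqslant (q;q)_\infty>0$; the only inessential slip is the closing remark on why $\alpha>0$ is needed---in fact $R>1$ holds for every $f\in C[0,1]$ since $\rho$ always converges in $\{|z|<1/q\}$, and the role of $\alpha>0$ is rather to push the admissible $\lambda$ beyond the range already covered by Corollary~\ref{cor1}.
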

Indeed, in this case, $\rho$ is analytic in $\{z:|z|<q^{-1-\alpha}\}.$
\begin{corollary} \label{cor3}
If, for every $\alpha>0$, the estimate $f(q^j)=o(q^{\alpha j})$, $j \to \infty$ holds, then, for every $\lambda \geqslant 0,$ \eqref{estinf} is true. 
\end{corollary}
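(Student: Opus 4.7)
The plan is to derive Corollary \ref{cor3} as a direct consequence of Theorem \ref{estimate}, by showing that the hypothesis forces $\rho$ to be entire, hence analytic in $\Delta_R$ for every $R > 1$.

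First, I would examine the Taylor coefficients of $\rho$ given in \eqref{ro}, namely $a_j := f(q^j)q^j/(q;q)_j$. Since $(q;q)_j \to (q;q)_\infty > 0$, the factor $1/(q;q)_j$ is bounded above by a constant independent of $j$. Hence, under the hypothesis that $f(q^j) = o(q^{\alpha j})$ as $j \to \infty$ for every $\alpha > 0$, one obtains $|a_j| \leqslant C_\alpha \, q^{(1+\alpha)j}$ for every $\alpha > 0$ and all sufficiently large $j$. Taking $j$-th roots and letting $j \to \infty$, this gives $\limsup_{j \to \infty} |a_j|^{1/j} \leqslant q^{1+\alpha}$ for every $\alpha > 0$, whence $\limsup_{j \to \infty} |a_j|^{1/j} = 0$. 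By the Cauchy--Hadamard formula, the series \eqref{ro} therefore has infinite radius of convergence, so $\rho$ is an entire function.

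Next, since $\rho$ is entire, it is analytic in $\Delta_R$ for arbitrarily large $R > 1$. Fixing any $\lambda \geqslant 0$, I would choose $R$ large enough that $\lambda < (\ln R)/\ln(1/q)$, which is possible as the right-hand side tends to infinity with $R$. Applying Theorem \ref{estimate} with this choice of $R$ delivers exactly
\begin{align*}
M(r;D_{\infty,q}f) = o\!\left(\frac{(-r;q)_\infty}{r^\lambda}\right), \qquad r \to \infty,
\end{align*}
which is \eqref{estinf}. As $\lambda \geqslant 0$ was arbitrary, the corollary follows.

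There is no substantive obstacle here; the entire content of the corollary is the translation of the decay hypothesis on $\{f(q^j)\}$ into an unbounded radius of analyticity for $\rho$, together with an appeal to Theorem \ref{estimate}. Alternatively, one could invoke Corollary \ref{coroestinf} directly: for any prescribed $\lambda \geqslant 0$, the hypothesis implies in particular $f(q^j) = O(q^{\alpha j})$ with $\alpha = \lambda$, and the conclusion of that corollary for this $\alpha$ already yields \eqref{estinf} for the given $\lambda$. Either route gives a proof of two or three lines.
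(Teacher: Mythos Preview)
Your proposal is correct and matches the paper's (implicit) reasoning: the corollary is stated without proof, immediately after the remark that the hypothesis of Corollary~\ref{coroestinf} forces $\rho$ to be analytic in $\{|z|<q^{-1-\alpha}\}$, so the intended argument is precisely that the decay assumption on $f(q^j)$ enlarges the domain of analyticity of $\rho$ without bound, and Theorem~\ref{estimate} then applies for every $\lambda$. One tiny caveat on your alternative route: choosing $\alpha=\lambda$ in Corollary~\ref{coroestinf} fails when $\lambda=0$ since that corollary requires $\alpha>0$; simply take any $\alpha>0$ there instead.
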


The estimate in Theorem \ref{estimate} is sharp as demonstrated by the assertion below.

\begin{theorem} \label{teocntrd}
For every $\lambda > 1,$ there exists $f\in C[0, 1]$ such that 
\begin{align*}
M(r;D_{\infty,q}f)\geqslant C r^{-\lambda}(-r;q)_{\infty}, \quad r \to \infty. 
\end{align*}
\end{theorem}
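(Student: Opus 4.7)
The strategy is to exhibit an explicit $f$ via exponential sampling on $\{q^j\}$ and extract the lower bound from a single dominant term in a reformulation of $D_{\infty,q}f$. Given $\lambda>1$, choose any $\alpha\in(0,\lambda-1)$ with $\alpha\notin\mathbb{Z}$ (possible since this open interval is nonempty and contains non-integer points). Define $f(q^j):=q^{\alpha j}$ for $j\geq 0$ and $f(0):=0$, and extend $f$ linearly on each interval $[q^{j+1},q^j]$; continuity at $0$ follows from $q^{\alpha j}\to 0$, so $f\in C[0,1]$. By Euler's identity \eqref{euler1},
\[
\rho(z)=\sum_{j=0}^\infty\frac{q^{(\alpha+1)j}}{(q;q)_j}z^j=\frac{1}{(q^{\alpha+1}z;q)_\infty}.
\]
Inserting $f(q^j)=q^{\alpha j}$ into \eqref{analy}, interchanging the order of summation (justified by absolute convergence), re-indexing with $m=n+j$, and applying the finite $q$-binomial identity $(q^{-\alpha};q)_m=\sum_{n=0}^m\qbinom{m}{n}{q}(-1)^n q^{n(n-1)/2}q^{-\alpha n}$ to collapse the inner sum yields
\[
(D_{\infty,q}f)(z)=\sum_{m=0}^\infty\beta_m(z;q)_m,\qquad\beta_m:=\frac{(q^{-\alpha};q)_m\,q^{(\alpha+1)m}}{(q;q)_m}.
\]

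I then evaluate at $z=-r$ for $r>0$, making each $(-r;q)_m=\prod_{j=0}^{m-1}(1+rq^j)$ positive. Because $\alpha$ is non-integer, no factor $1-q^{j-\alpha}$ of $(q^{-\alpha};q)_m$ vanishes, and for every $m\geq m_0:=\lceil\alpha\rceil$ the product contains exactly $m_0$ negative factors (those with $j<\alpha$). Therefore $\beta_m$ has the constant sign $(-1)^{m_0}$ on the tail $m\geq m_0$, and $\bigl|\sum_{m\geq m_0}\beta_m(-r;q)_m\bigr|=\sum_{m\geq m_0}|\beta_m|(-r;q)_m$. The reverse triangle inequality then gives
\[
\bigl|(D_{\infty,q}f)(-r)\bigr|\geq\sum_{m\geq m_0}|\beta_m|(-r;q)_m-P(r),
\]
where $P(r)=\bigl|\sum_{m<m_0}\beta_m(-r;q)_m\bigr|$ is a polynomial in $r$ of degree at most $m_0-1$.

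It suffices to retain the single term at $m=N:=\lfloor\log_{1/q}r\rfloor$. Writing $(-r;q)_N=(-r;q)_\infty/(-rq^N;q)_\infty$ and noting $rq^N\in[1,1/q)$, so that $(-rq^N;q)_\infty$ lies between the positive constants $(-1;q)_\infty$ and $(-1/q;q)_\infty$, together with $(q^{-\alpha};q)_N\to(q^{-\alpha};q)_\infty\neq 0$, $(q;q)_N\to(q;q)_\infty$, and $q^{(\alpha+1)N}\asymp r^{-(\alpha+1)}$, I obtain $|\beta_N|(-r;q)_N\geq C\,r^{-(\alpha+1)}(-r;q)_\infty$. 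Since $(-r;q)_\infty$ grows faster than any power of $r$, $P(r)$ is eventually negligible compared with this bound, whence for $r$ sufficiently large,
\[
M(r;D_{\infty,q}f)\geq\bigl|(D_{\infty,q}f)(-r)\bigr|\geq C\,r^{-(\alpha+1)}(-r;q)_\infty\geq C\,r^{-\lambda}(-r;q)_\infty,
\]
using $\alpha+1<\lambda$. The main obstacle is the combinatorial step of reducing the double series to the compact form $\sum_m\beta_m(z;q)_m$; once this representation and the sign constancy on $m\geq m_0$ are established, the single-term lower bound at $m=N$ delivers the theorem.
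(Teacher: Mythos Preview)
Your argument is correct and takes a genuinely different route from the paper. The paper engineers $f$ so that the auxiliary function $g(z)=(qz;q)_\infty\rho(z)$ reduces to the rational function $1/(1-q^{\lambda}z)$, then invokes the divided-difference representation of Theorem~\ref{divi} together with the Mean Value Theorem to obtain $g[1;q;\ldots;q^k]\geq q^{\lambda k}$; summing gives $M(r;D_{\infty,q}f)\geq (q;q)_\infty(-q^{\lambda}r;q)_\infty$, and the two-sided estimate \eqref{zz2} delivers the exact exponent~$\lambda$. You instead take the simpler sample values $f(q^j)=q^{\alpha j}$ and bypass Theorem~\ref{divi} altogether, working directly from the expansion \eqref{analy}: the finite $q$-binomial formula collapses the double sum to $\sum_m\beta_m(z;q)_m$ with $\beta_m=(q^{-\alpha};q)_m\,q^{(\alpha+1)m}/(q;q)_m$, after which evaluation at $z=-r$, eventual sign constancy of $\beta_m$ for $m\geq\lceil\alpha\rceil$, and a single-term estimate at $m\approx\log_{1/q}r$ yield the bound with exponent $\alpha+1<\lambda$. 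Your approach is more self-contained (it needs only Theorem~\ref{ancont} and a standard $q$-identity) and the example $f$ is more natural, at the cost of a slightly weaker exponent and the extra bookkeeping of discarding the finitely many terms where $\beta_m$ may carry the opposite sign; the paper's approach reuses the divided-difference machinery already built for Theorem~\ref{estimate} and hits the exponent~$\lambda$ exactly.
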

Theorem \ref{estimate} and Corollaries \ref{cor1}-\ref{cor3} establish the connection between the radius of convergence for series \eqref{ro} and the rate of growth for $D_{\infty,q}f.$ In the general sense, the greater the radius is, the slower the growth becomes. Approaching the problem from a different angle, the dependence of the growth on the differentiability of $f$ at the origin is addressed in the next assertion. The statement makes it possible to obtain better estimates for $M(r;D_{\infty,q}f)$ than those guaranteed by Theorem \ref{estimate} when $f$ is differentiable at $0$ even though the series \eqref{ro} converges solely in the smallest admissible disc.

\begin{theorem} \label{teoestinf1}
Let $f$ be $m$ times differentiable at $0$ from the right. Then, 
\begin{align}
M(r;D_{\infty,q}f)=o(r^{-\lambda}(-r;q)_{\infty}), \quad r \to \infty, \label{estinf1}
\end{align}
for all $\lambda<1+m.$
\end{theorem}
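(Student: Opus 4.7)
The plan is to apply the Peano form of Taylor's theorem to split $f$ into a polynomial head plus a tail that vanishes fast at the origin, and then use the linearity of $D_{\infty,q}$, treating the two pieces with Theorem \ref{divi} and Theorem \ref{estimate}, respectively. Setting $a_k:=f^{(k)}(0^+)/k!$ for $k=0,\dots,m$, put $P(x):=\sum_{k=0}^m a_k x^k$ and $R:=f-P$. Then $R\in C[0,1]$, and $R(x)=o(x^m)$ as $x\to 0^+$ by Taylor's theorem; in particular $R(q^j)=o(q^{mj})$ as $j\to\infty$.

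For the polynomial part, I would compute the associated $\rho_P$ by means of Euler's identity \eqref{euler1}:
\begin{align*}
\rho_P(z)=\sum_{j=0}^{\infty}\frac{P(q^j)q^j}{(q;q)_j}z^j=\sum_{k=0}^{m} a_k\sum_{j=0}^{\infty}\frac{(q^{k+1}z)^j}{(q;q)_j}=\sum_{k=0}^{m}\frac{a_k}{(q^{k+1}z;q)_\infty}.
\end{align*}
Multiplying by $(qz;q)_\infty$ and using the cancellation $(qz;q)_\infty/(q^{k+1}z;q)_\infty=(qz;q)_k$, one obtains $g_P(z)=(qz;q)_\infty\rho_P(z)=\sum_{k=0}^{m} a_k(qz;q)_k$, a polynomial in $z$ of degree at most $m$. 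Since divided differences of a polynomial vanish beyond its degree, the series for $D_{\infty,q}P$ given by Theorem \ref{divi} truncates at $k=m$, exhibiting $D_{\infty,q}P$ as a polynomial in $z$ of degree at most $m$. Hence $M(r;D_{\infty,q}P)=O(r^m)$, and because $(-r;q)_\infty$ grows faster than any power of $r$, this yields $M(r;D_{\infty,q}P)=o(r^{-\lambda}(-r;q)_\infty)$ for every $\lambda\in\mathbb{R}$.

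For the remainder $R$, the estimate $R(q^j)=o(q^{mj})$ combined with $(q;q)_j\to(q;q)_\infty>0$ gives $\limsup_{j\to\infty}\lvert R(q^j)q^j/(q;q)_j\rvert^{1/j}\leqslant q^{m+1}$, so that $\rho_R$ is analytic in the open disc $\{|z|<q^{-(m+1)}\}$, and hence on $\Delta_{R'}$ for every $R'<q^{-(m+1)}$. Applying Theorem \ref{estimate} to $R\in C[0,1]$ with such an $R'$ (chosen arbitrarily close to $q^{-(m+1)}$) supplies $M(r;D_{\infty,q}R)=o(r^{-\lambda}(-r;q)_\infty)$ for every $\lambda<\ln R'/\ln(1/q)$; letting $R'\uparrow q^{-(m+1)}$ delivers the desired range $\lambda<m+1$.

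Adding the two contributions via $D_{\infty,q}f=D_{\infty,q}P+D_{\infty,q}R$ establishes \eqref{estinf1}. The main obstacle is verifying that $D_{\infty,q}P$ is again a polynomial of degree at most $m$: this is not transparent from the definition of $D_{\infty,q}$, and the argument hinges on combining Euler's identity, the quotient formula $(qz;q)_\infty/(q^{k+1}z;q)_\infty=(qz;q)_k$, and the vanishing of high-order divided differences afforded by Theorem \ref{divi}.
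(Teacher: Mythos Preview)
Your proof is correct and follows essentially the same route as the paper: Taylor-split $f$ into a polynomial head and an $o(x^m)$ tail, observe that $D_{\infty,q}$ sends the polynomial to a polynomial of degree $\leqslant m$, and handle the tail via the growth estimate (the paper invokes Corollary~\ref{coroestinf} with $\alpha=m$, which is exactly your Theorem~\ref{estimate} argument). The only difference is that the paper disposes of the polynomial part by citing Gupta's result \cite[Remark~3]{gupta} that $D_{\infty,q}$ preserves polynomial degree, whereas you supply a self-contained proof of this fact by computing $g_P(z)=\sum_{k=0}^{m}a_k(qz;q)_k$ and invoking Theorem~\ref{divi}.
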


\begin{corollary}
If $f$ is infinitely differentiable at $0$ from the right, then \eqref{estinf1} holds for all $\lambda > 0.$
In particular, \eqref{estinf1} is valid whenever $f$ is analytic in a neighbourhood of 0.
\end{corollary}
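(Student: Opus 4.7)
The strategy is to split $f$ via Taylor's theorem at $0$ and treat the two pieces separately, applying Theorem \ref{divi} and Theorem \ref{estimate} respectively. Set $c_k := f^{(k)}(0^+)/k!$, let $P(x) := \sum_{k=0}^m c_k x^k$ be the Taylor polynomial of $f$ at $0$ of order $m$, and let $h := f - P \in C[0,1]$. The Peano form of the remainder gives $h(x) = o(x^m)$ as $x \to 0^+$, and in particular $h(q^j) = o(q^{mj})$ as $j\to\infty$. By linearity of $D_{\infty,q}$, it suffices to control $D_{\infty,q}P$ and $D_{\infty,q}h$ independently.

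For the polynomial part, Euler's identity \eqref{euler1} gives
\begin{align*}
\rho_P(z) = \sum_{k=0}^m c_k \sum_{j=0}^\infty \frac{(q^{k+1}z)^j}{(q;q)_j} = \sum_{k=0}^m \frac{c_k}{(q^{k+1}z;q)_\infty},
\end{align*}
so that $g_P(z) := (qz;q)_\infty \rho_P(z) = \sum_{k=0}^m c_k (qz;q)_k$ reduces to a polynomial of degree at most $m$ in $z$. Since divided differences of order exceeding $m$ of such a polynomial vanish, Theorem \ref{divi} forces $D_{\infty,q}P$ to be itself a polynomial of degree at most $m$. Because $(-r;q)_\infty$ grows faster than any power of $r$, this immediately yields $M(r; D_{\infty,q}P) = o(r^{-\lambda}(-r;q)_\infty)$ for every $\lambda \geq 0$.

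For the remainder, the bound $h(q^j) = o(q^{mj})$ together with $(q;q)_j \to (q;q)_\infty > 0$ implies
\begin{align*}
\limsup_{j \to \infty} \left| \frac{h(q^j)\, q^j}{(q;q)_j} \right|^{1/j} \leq q^{m+1},
\end{align*}
so $\rho_h$ is analytic in $\{|z| < q^{-(m+1)}\}$. Given any $\lambda < m+1$, I pick $R \in (1, q^{-(m+1)})$ with $\ln R/\ln(1/q) > \lambda$, and Theorem \ref{estimate} applied to $h$ delivers $M(r; D_{\infty,q}h) = o(r^{-\lambda}(-r;q)_\infty)$. Adding the two bounds completes the argument.

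The delicate point is the polynomial-in, polynomial-out phenomenon for $D_{\infty,q}$: the apparent infinite series for $g_P$ telescopes thanks to the cancellation $(qz;q)_\infty/(q^{k+1}z;q)_\infty = (qz;q)_k$, and the divided-difference representation of Theorem \ref{divi} then truncates at index $m$. Without this observation, the polynomial piece would only yield the crude bound from Corollary \ref{est}, and no improvement beyond the value of $\lambda$ guaranteed by the radius of analyticity of $\rho_h$ would be available.
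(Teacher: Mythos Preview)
Your argument is correct and follows essentially the same route as the paper: Taylor-split $f=P+h$, handle the polynomial piece by observing that $D_{\infty,q}$ sends polynomials to polynomials of the same degree, and handle the remainder via Theorem~\ref{estimate} (the paper phrases this last step through Corollary~\ref{coroestinf}, which is the same thing). The one substantive difference is that where the paper simply cites \cite[Remark~3]{gupta} for the polynomial-preservation property, you derive it internally by computing $g_P(z)=\sum_{k=0}^m c_k(qz;q)_k$ and invoking Theorem~\ref{divi}; this is a nice self-contained alternative.

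One small omission: you introduce $m$ without quantification and conclude only for $\lambda<m+1$, whereas the Corollary asserts the estimate for \emph{all} $\lambda>0$. You need the closing sentence that, since $f$ is infinitely differentiable at $0$, the integer $m$ is at your disposal, so given any $\lambda>0$ you first choose $m>\lambda-1$ and then run the argument. (In the paper this step is the passage from Theorem~\ref{teoestinf1} to its corollary, left implicit there as well.) The ``in particular'' clause about analyticity is immediate once this is in place.
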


\section{Auxiliary Results}

In what comes next, the function $ \tau $ given by
\begin{align*}
\tau(z)= (z;q)_{\infty} \sum_{k=0}^{\infty} \frac{z^k}{(q;q)_k^{2}}, \quad |z|<1,
\end{align*}
plays a key role.

\begin{lemma}
The function $\tau$ admits an analytic continuation from the open unit disc as an entire function.
\end{lemma}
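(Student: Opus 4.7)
The strategy is to massage the definition of $\tau$ on the unit disc into an absolutely and uniformly convergent series of polynomials, using the two Euler identities \eqref{euler} and \eqref{euler1}. Once the rearrangement is done, the resulting representation will manifestly define an entire function, and so $\tau$ will be its analytic continuation.

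Concretely, for $|z|<1$, I would start by rewriting the reciprocal of $(q;q)_k$ via the standard telescoping $(q;q)_k = (q;q)_\infty/(q^{k+1};q)_\infty$, which gives
\begin{align*}
\tau(z) = \frac{(z;q)_\infty}{(q;q)_\infty}\sum_{k=0}^\infty \frac{(q^{k+1};q)_\infty}{(q;q)_k}\,z^k.
\end{align*}
Next, I would apply \eqref{euler} to expand $(q^{k+1};q)_\infty = \sum_{n=0}^\infty \frac{(-1)^n q^{n(n-1)/2}}{(q;q)_n}\,q^{n(k+1)}$, obtaining a double series in $k$ and $n$. For $|z|<1$, absolute convergence of this double series follows from the crude bound $\sum_n \frac{q^{n(n-1)/2}q^{n(k+1)}}{(q;q)_n}=(-q^{k+1};q)_\infty\leq (-q;q)_\infty$, which allows the order of summation to be interchanged. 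Summing first in $k$ by \eqref{euler1} collapses the inner series to $1/(q^n z;q)_\infty$, and using $(q^n z;q)_\infty = (z;q)_\infty/(z;q)_n$ causes the factor $(z;q)_\infty$ outside to cancel. The outcome is the clean identity
\begin{align*}
\tau(z)=\frac{1}{(q;q)_\infty}\sum_{n=0}^\infty \frac{(-1)^n q^{n(n+1)/2}}{(q;q)_n}\,(z;q)_n, \qquad |z|<1.
\end{align*}

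Now I would observe that the right-hand side is a series of polynomials (since $(z;q)_n$ is a polynomial of degree $n$) whose growth is easily controlled: on any disc $|z|\leq r$ one has $|(z;q)_n|\leq (-r;q)_\infty$, so
\begin{align*}
\sum_{n=0}^\infty \frac{q^{n(n+1)/2}}{(q;q)_n}\bigl|(z;q)_n\bigr|\;\leq\;(-r;q)_\infty\sum_{n=0}^\infty\frac{q^{n(n+1)/2}}{(q;q)_n}\;=\;(-r;q)_\infty(-q;q)_\infty,
\end{align*}
using \eqref{euler} with $z=-q$ at the end. Hence the series converges uniformly on every compact subset of $\mathbb{C}$, so it defines an entire function that agrees with $\tau$ on $\{|z|<1\}$ and therefore serves as its analytic continuation.

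The only delicate point is the Fubini step; once that is justified by the majorization above, the rest is bookkeeping with the two Euler identities. A pleasant by-product that will be useful later is the ready-made estimate $M(r;\tau)=O((-r;q)_\infty)$ as $r\to\infty$, which falls out of the same bound.
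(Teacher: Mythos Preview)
Your proof is correct and follows essentially the same route as the paper: rewrite $1/(q;q)_k$ via $(q^{k+1};q)_\infty/(q;q)_\infty$, expand by \eqref{euler}, swap sums, collapse the $k$-sum by \eqref{euler1}, and arrive at the polynomial series \eqref{tau1}, then bound termwise on discs. The only cosmetic differences are that you justify the Fubini step explicitly and use the sharper uniform bound $|(z;q)_n|\le(-r;q)_\infty$ in place of the paper's $(1+R)^n$.
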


\begin{proof}
Consider 
\begin{align*}
\sum_{k=0}^{\infty} \frac{z^k}{(q;q)_k^2}=\sum_{k=0}^{\infty} \frac{z^k}{(q;q)_k} \frac{(q^{k+1};q)_\infty}{(q;q)_\infty}.
\end{align*}
By \eqref{euler}, with $z=q^{k+1},$ one has
\begin{align*}
(q^{k+1}; q)_\infty=\sum_{n=0}^{\infty} \frac{(-1)^n q^{n(n-1)/2}}{(q;q)_n} (q^{k+1})^n,
\end{align*}
whence
\begin{align*}
\sum_{k=0}^{\infty} \frac{z^k}{(q;q)_k^2}&= \frac{1}{(q;q)_\infty} \sum_{k=0}^{\infty} \frac{z^k}{(q;q)_k} \sum_{n=0}^{\infty} \frac{(-1)^n q^{n(n-1)/2} q^{(k+1)n}}{(q;q)_n} \\
&=\frac{1}{(q;q)_\infty} \sum_{n=0}^{\infty} \frac{(-1)^n q^{n(n+1)/2}}{(q;q)_n} \sum_{k=0}^{\infty} \frac{(q^n z)^k}{(q;q)_k}.
\end{align*}
By virtue of \eqref{euler1}, it follows that
\begin{align*}
\sum_{k=0}^{\infty} \frac{z^k}{(q;q)_k^2}=\frac{1}{(q;q)_\infty} \sum_{n=0}^{\infty} \frac{(-1)^n q^{n(n+1)/2}}{(q;q)_n} \frac{1}{(q^n z;q)_\infty}, \quad |z|<1.
\end{align*}
Consequently, one obtains
\begin{align}
\tau(z)&= \frac{1}{(q;q)_\infty}\sum_{n=0}^{\infty} \frac{(-1)^n q^{n(n+1)/2}}{(q;q)_n} \frac{(z;q)_\infty}{(q^n z;q)_\infty} \nonumber \\
&=\frac{1}{(q;q)_\infty}\sum_{n=0}^{\infty} \frac{(-1)^n q^{n(n+1)/2}}{(q;q)_n} (z;q)_n, \quad |z|<1. \label{tau1}
\end{align}
Now, if $z\in\Delta_R,$ then
\begin{align*}
\sum_{n=0}^{\infty}  \left| \frac{(-1)^n q^{n(n+1)/2}}{(q;q)_n} (z;q)_n \right| \leqslant \sum_{n=0}^{\infty} \frac{q^{n(n+1)/2}}{(q;q)_n} (1+R)^n < \infty.
\end{align*}
Hence, $\tau(z)$ is analytic in $\Delta_R$ for each $R>0$ and \eqref{tau1} is valid for all $z \in \mathbb{C}$. Therefore, $\tau(z)$ is an entire function.
\end{proof}

\begin{lemma} \label{growdivi}
Let $R>1$ be such that $\rho$ given by \eqref{ro} is analytic in $\{z: |z|\leqslant R \}$. Then, 
\begin{align*}
\left|g[1;q; \ldots; q^k]\right| \leqslant C q^{\lambda k}
\end{align*}
for every $ \lambda<(\ln R)/\ln(1/q)$.
\end{lemma}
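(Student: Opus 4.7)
My plan is to invoke the Hermite integral representation for divided differences. Since the nodes $1, q, q^2, \ldots, q^k$ lie in the interval $(0,1]$ and $g(z) = (qz;q)_\infty\, \rho(z)$ is analytic in $\overline{\Delta_R}$ (as $(qz;q)_\infty$ is entire and $\rho$ is analytic in $\overline{\Delta_R}$ by hypothesis), the divided difference admits the contour integral representation
\[
g[1, q, \ldots, q^k] = \frac{1}{2\pi i} \oint_{|z|=R'} \frac{g(z)\,dz}{\prod_{j=0}^k (z-q^j)},
\]
for any $R'$ with $1 < R' < R$. The strategy is to choose $R'$ as large as the hypothesis on $\lambda$ permits and then estimate numerator and denominator on this circle.

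First, I would dispose of the trivial case $\lambda \leqslant 0$, where $q^{\lambda k} \geqslant 1$ and the bound follows from $R'>1$ alone. For $\lambda > 0$, the condition $\lambda < (\ln R)/\ln(1/q)$ is equivalent to $q^{-\lambda} < R$, so I would pick $R'$ satisfying $q^{-\lambda} < R' < R$; such an $R'$ automatically exceeds $1$. Setting $M := \max_{|z|=R'}|g(z)| < \infty$, the numerator contributes at most $M$.

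For the denominator, the key estimate is the uniform lower bound
\[
\prod_{j=0}^k |z - q^j| \geqslant \prod_{j=0}^k (R' - q^j) = (R')^{k+1} \prod_{j=0}^k (1 - q^j/R') \geqslant (R')^{k+1}\, (1/R'; q)_\infty,
\]
valid for all $|z|=R'$. Here the last inequality uses that each factor $1-q^j/R'$ lies in $(0,1)$, so extending the partial product to the infinite product can only decrease it. The crucial point is that since $1/R' < 1$, the constant $c := (1/R';q)_\infty$ is strictly positive and independent of $k$.

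Combining these two ingredients yields
\[
\bigl|g[1, q, \ldots, q^k]\bigr| \leqslant \frac{1}{2\pi} \cdot 2\pi R' \cdot \frac{M}{c(R')^{k+1}} = \frac{M}{c}\,(R')^{-k},
\]
and since $R' > q^{-\lambda}$ gives $(R')^{-k} < q^{\lambda k}$, the claimed bound follows with $C = M/c$. I do not anticipate any serious obstacle: the only subtlety is ensuring that the feasibility range for $R'$ is nonempty, which is precisely guaranteed by the hypothesis $\lambda < (\ln R)/\ln(1/q)$, and verifying that the lower bound on the denominator product remains uniform in $k$, which is handled by passing to the convergent infinite $q$-product.
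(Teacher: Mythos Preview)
Your proof is correct and follows essentially the same route as the paper: both invoke the Hermite contour--integral representation of the divided difference and bound the integrand on a circle enclosing the nodes $1,q,\ldots,q^k$. Your treatment of the denominator via the uniform lower bound $\prod_{j=0}^k(R'-q^j)\geqslant (R')^{k+1}(1/R';q)_\infty$ is in fact tidier than the paper's, which integrates on $|\zeta|=R$ and then splits into two cases according to whether $q^{-\lambda_0}\leqslant R-1$ or $R-1<q^{-\lambda_0}\leqslant R$.
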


\begin{proof}
It is known that (see for example, \cite [Section 2.7., p.44, Eq. (4)]{lorentz})
\begin{align*}
g[a_0; \ldots; a_k] =\frac{1}{2 \pi i} \oint_L \frac{g(\zeta)d\zeta}{(\zeta-a_0) \ldots (\zeta-a_k)},
\end{align*}
where $L$ is a positively-oriented, simple and closed curve encircling the distinct points $a_0, \ldots, a_k$ and $g$ is analytic anywhere on and inside $L$.

Therefore,
\begin{align*}
g[1;q; \ldots; q^k] = \frac{1}{2 \pi i} \oint_{|\zeta|=R} \frac{g(\zeta)d\zeta}{(\zeta-1) (\zeta-q) \ldots (\zeta-q^k)}.        
\end{align*}
Now, given $\lambda_0$ such that $0 < \lambda_0 < (\ln R)/\ln(1/q),$ that is, $1<q^{-\lambda_0}<R.$ The two cases will be considered:

\noindent{\bf Case 1.} If $q^{-\lambda_0} \leqslant R-1,$ then $g[1;q;\ldots;q^k]$ can be estimated as
\begin{align*}
\left|g[1;q; \ldots; q^k]\right| &\leqslant \frac{1}{2 \pi} \cdot \frac{M(R;g)}{(R-1) (R-q) \ldots (R-q^k)} \cdot 2 \pi R\\
& \leqslant \frac{M(R;g)R}{(R-1)^{k+1}} \leqslant 2M(R;g) q^{\lambda_0 k}.        
\end{align*}
\noindent{\bf Case 2.} If $R-1<q^{-\lambda_0} \leqslant R,$ then opt for $m_0\in \mathbb{N}_0$ such that $R-q^m>q^{-\lambda_0}$ whenever $m \geqslant m_0.$ 
Then, for $k \geqslant m_0,$ one has
\begin{align*}
\left|g[1;q; \ldots; q^k]\right| &\leqslant  \frac{M(R;g)R}{(R-1)\cdots(R-q^{m_0-1})(R-q^{m_0})\cdots (R-q^k)} \\
& \leqslant  \frac{M(R;g)R}{(R-1)\cdots(R-q^{m_0-1})} \cdot \frac{1}{(R-q^{m_0})^{k-m_0+1}} \\
& \leqslant C_{R,q,g} \frac{1}{(R-q^{m_0})^{k}} < C q^{\lambda_0 k}, \quad k\geqslant m_0.        
\end{align*} 
As a result, $\left|g[1;q; \ldots; q^k]\right| \leqslant C q^{\lambda_0 k}$ for all $k,$ possibly with a different $C.$

Combining the outcomes of the two cases yields   
$\left|g[1;q; \ldots; q^k]\right| \leqslant C q^{\lambda_0 k},$
and, in turn,  
$\left|g[1;q; \ldots; q^k]\right| \leqslant C q^{\lambda k}$ for all $\lambda \leqslant \lambda_0.$
Since $\lambda_0$ has been chosen arbitrarily, it follows that the latter inequality holds for all $\lambda < (\ln R)/\ln(1/q)$
as stated.
\end{proof}

\section{Proofs of Main Results}\label{sec:pm}

\begin{proof}[Proof of Theorem \ref{ancont}]
Using \eqref{Akf}, one obtains 
\begin{align*}
(D_{\infty,q}f)(z)&=\sum_{k=0}^{\infty} \left( \frac{(q,q)_\infty}{(q,q)_k} \sum_{j=0}^{\infty} \frac{f(q^j) q^{(k+1)j}}{(q,q)_j}\right)p_{\infty k}(q;z), \quad |z|<1.
\end{align*}
Recalling \eqref{pk} leads to
\begin{align*}
(D_{\infty,q}f)(z)&=\sum_{k=0}^{\infty} \frac{(q;q)_\infty}{(q;q)_k} \sum_{j=0}^{\infty} \frac{f(q^j)q^{(k+1)j}}{(q;q)_j} \frac{(z;q)_\infty z^k}{(q;q)_k} \\
&=(q;q)_\infty (z;q)_\infty \sum_{j=0}^{\infty} \frac{f(q^j)q^{j}}{(q;q)_j} \sum_{k=0}^{\infty} \frac{(q^jz)^k}{(q;q)_k^2} \\
&=(q;q)_\infty (z;q)_\infty \sum_{j=0}^{\infty} \frac{f(q^j)q^{j}}{(q;q)_j} \frac{\tau(q^jz)}{(q^jz;q)_\infty},  \\
&=(q;q)_\infty \sum_{j=0}^{\infty} \frac{f(q^j)q^{j}}{(q;q)_j} (z;q)_j \tau(q^jz), \quad |z|<1.
\end{align*}
By \eqref{tau1},
\begin{align*}
\tau(q^jz)&= \frac{1}{(q;q)_\infty}\sum_{n=0}^{\infty} \frac{(-1)^n q^{n(n+1)/2}}{(q;q)_n} (q^j z;q)_n,
\end{align*}
and, hence,
\begin{align}
(D_{\infty,q}f)(z)&=\sum_{j=0}^{\infty} \frac{f(q^j)q^{j}}{(q;q)_j} (z;q)_j \sum_{n=0}^{\infty} \frac{(-1)^n q^{n(n+1)/2}}{(q;q)_n} (q^j z;q)_n \nonumber \\
&=\sum_{j=0}^{\infty} \frac{f(q^j)q^{j}}{(q;q)_j} \sum_{n=0}^{\infty} \frac{(-1)^n q^{n(n+1)/2}}{(q;q)_n} (z;q)_{j+n}, \quad |z|<1. \label{dinf}
\end{align}
Since, for $R>0$ and $z\in\Delta_R$, one has $|(z;q)_{j+n}| \leqslant (-R;q)_\infty$ for all $j,n \in \mathbb{N}_0$, the series in \eqref{dinf} converges uniformly in any closed disc $\Delta_R.$ Therefore, 
\begin{align*}
\left| \sum_{n=0}^{\infty} \frac{(-1)^n q^{n(n+1)/2}}{(q;q)_n} (z;q)_{j+n}\right| \leqslant (-R;q)_\infty \sum_{n=0}^{\infty} \frac{ q^{n(n+1)/2}}{(q;q)_n} =(-R;q)_\infty (-q;q)_\infty
\end{align*} 
which implies that, when $z\in\Delta_R,$
\begin{align*}
\left|(D_{\infty,q}f)(z) \right| \leqslant (-R;q)_\infty (-q;q)_\infty \sum_{j=0}^{\infty} \frac{ \left|f(q^j) \right| q^{j}}{(q;q)_j} \leqslant \|f\|_{C[0,1]} (-R;q)_\infty \frac{(-q;q)_\infty}{(q;q)_\infty}=:C_{f,q} (-R;q)_\infty.
\end{align*}
Consequently, $(D_{\infty,q}f)(z)$ is analytic in any disc of radius $R>0$. Thus, $(D_{\infty,q}f)(z)$ is entire. This completes the proof.
\end{proof}

\begin{proof}[Proof of Theorem \ref{divi}]
Starting from \eqref{Akf}, one arrives at
\begin{align*}
A_{\infty k}(f)&=(q^{k+1};q)_\infty \sum_{j=0}^{\infty} \frac{f(q^j) q^{(k+1)j}}{(q;q)_j}=[(qz;q)_\infty \: \rho(z)]\Big|_{z=q^k}=g(q^k).
\end{align*}
Therefore,
\begin{align*} 
(D_{\infty,q}f)(z)=(z;q)_\infty \sum_{k=0}^{\infty} g(q^k) \frac{z^k}{(q;q)_k}, \quad |z|<1/q.
\end{align*}
Application of Euler's identity \eqref{euler} leads to
\begin{align*}
(D_{\infty,q}f)(z)&=\sum_{k=0}^{\infty} \sum_{j=0}^{\infty} \frac{(-1)^k q^{k(k-1)/2} g(q^j) z^{k+j} }{(q;q)_k(q;q)_j} \\
&=\sum_{k=0}^{\infty} \sum_{j=0}^{k}  \frac{(-1)^{k-j} q^{(k-j)(k-j-1)/2} g(q^j) z^{k} }{(q;q)_{k-j}(q;q)_j} \\
&=\sum_{k=0}^{\infty} (-1)^k q^{k(k-1)/2} \left(\sum_{j=0}^{k}  \frac{(-1)^{-j} g(q^j)}{q^{j(j-1)/2} (q;q)_j q^{j(k-j)} (q;q)_{k-j}}\right) z^{k}, \quad |z|<1/q.
\end{align*}
Employing \cite[p. 44, formula (3)]{lorentz} with $x_j=q^j$, one arrives at
\begin{align*}
g[1;q; \ldots; q^k]=\sum_{j=0}^{k}  \frac{(-1)^{-j} g(q^j)}{q^{j(j-1)/2} (q;q)_j q^{j(k-j)} (q;q)_{k-j}}.
\end{align*}
Therefore, formula 
\begin{align*}
(D_{\infty,q}f)(z)=\sum_{k=0}^{\infty} (-1)^k q^{k(k-1)/2} \: g[1;q; \ldots; q^k]z^k
\end{align*}
holds for $|z|<1/q$ and also in every disc where $D_{\infty,q}f$ possess an analytic continuation. Applying Theorem \ref{ancont}, one completes the proof.
\end{proof}

\begin{proof}[Proof of Theorem \ref{estimate}]
By Theorem \ref{divi},
\begin{align*}
(D_{\infty,q}f)(z)=\sum_{k=0}^{\infty} (-1)^k q^{k(k-1)/2} \: g[1;q; \ldots; q^k]z^k, \quad z \in \mathbb{C}.
\end{align*}
Select $\lambda<(\ln R)/\ln(1/q)$ and take $\mu$ such that $\lambda<\mu<(\ln R)/\ln(1/q)$. Now, the growth of $D_{\infty,q}f$ may be estimated with the help of Lemma \ref{growdivi}, which implies $|g[1;q;\ldots;q^k]| \leqslant C q^{\mu k}.$ Therefore, 
\begin{align*}
\left| (D_{\infty,q}f)(z) \right|& \leqslant C \sum_{k=0}^{\infty} q^{k(k-1)/2} \left( q^{\mu} |z|\right)^k \leqslant C \sum_{k=0}^{\infty} \frac{q^{k(k-1)/2}}{(q;q)_k} \left( q^{\mu} |z|\right)^k, 
\end{align*}
and, hence,
\begin{align*}
M(r;D_{\infty,q}f)\leqslant C (-q^\mu r;q)_\infty. 
\end{align*}
Recall \cite[formula (2.6)]{zeng} that, for $r$ large enough,
\begin{align*}
C_1 \exp \Bigl\{\frac{\ln^2r}{2 \ln(1/q)}+\frac{\ln r}{2}\Bigr\} \leqslant (-r;q)_\infty \leqslant C_2 \exp \Bigl\{\frac{\ln^2r}{2 \ln(1/q)}+\frac{\ln r}{2}\Bigr\}.
\end{align*}
Consequently, 
\begin{align}
C_1 \frac{(-r;q)_\infty }{r^\mu} \leqslant (-q^\mu r;q)_\infty \leqslant C_2 \frac{(-r;q)_\infty }{r^\mu} \label{zz2}
\end{align}
for $r$ large enough. 

As a result,
\begin{align*}
M(r;D_{\infty,q}f)&=O\left( \frac {(-r;q)_{\infty}} {r^{\mu}} \right), \quad r \to \infty, \\
&=o \left( \frac {(-r;q)_{\infty}} {r^{\lambda}} \right), \quad r \to \infty,
\end{align*}
as stated.
\end{proof}

\begin{proof}[Proof of Theorem \ref{teocntrd}]
For $\lambda>1,$ set $\alpha=q^{\lambda-1} \in (0,1)$ and $$s_j=\sum_{k=0}^j \frac{\alpha^k}{(q;q)_{j-k}}, \quad j\in\mathbb{N}_0.$$
Obviously, the sequence $\{s_j\}$ is bounded. In addition, it is increasing because, for $j\in\mathbb{N}_0,$ 
$$s_{j+1}-s_j=\sum_{k=0}^j \alpha^k \left(\frac{1}{(q;q)_{j+1-k}}-\frac{1}{(q;q)_{j-k}}\right) + \alpha^{j+1} > 0.$$
Consequently, $\{s_j\}$ converges. Now, let $f\in C[0, 1]$ be such that $f(q^j)=(q;q)_js_j.$ This is possible due to the fact that $\{(q;q)_js_j\}$ is convergent as a product of two convergent sequences. For this $f$, one has
$$
\rho(z)=\sum_{j=0}^\infty s_j (qz)^j.
$$
Evidently, $\rho$ is analytic in $\{z:|z|<1/q\}$ and 
$$
\rho(z)=\sum_{j=0}^\infty \left(\sum_{k=0}^j \frac{\alpha^k}{(q;q)_{j-k}}\right) (qz)^j
=\sum_{j=0}^\infty \frac{(qz)^j}{(q;q)_{j}} \sum_{k=0}^\infty (\alpha qz)^k = \frac{1}{(qz;q)_\infty} \cdot \frac{1}{1-\alpha q z}, \quad |z|<1/q.
$$
Hence, $g(z)=\rho(z) (qz;q)_\infty = 1/(1-\alpha q z),$ whence $g$ is analytic in $\{z:|z|<1/(\alpha q)\}.$ Plain calculations reveal: 
$$
g^{(k)}(z)= \frac{(\alpha q)^k k!}{(1-\alpha q z)^{k+1}}, \quad k \in\mathbb{N}_0.
$$
By the Intermediate Value Theorem, 
$$
g[1;q;\ldots;q^k] = \frac{g^{(k)}(\xi)}{k!},\quad \xi \in(q^k, 1).
$$
Since all $g^{(k)}(x)$ are increasing on $[0, 1],$ there holds
$$
g[1;q;\ldots;q^k] \geqslant \frac{g^{(k)}(q^k)}{k!}=\frac{(\alpha q)^k}{(1-\alpha q^{k+1})^{k+1}} \geqslant (\alpha q)^k,\quad k \in \mathbb{N}_0.
$$
As a result,
\begin{align*}
M(r; D_{\infty, q}f) &= \sum_{k=0}^\infty q^{k(k-1)/2}g[1;q;\ldots;q^k] r^k \geqslant (q;q)_\infty \sum_{k=0}^\infty \frac{q^{k(k-1)/2}}{(q;q)_k} (\alpha q r)^k
=(q;q)_\infty (-\alpha q r;q)_\infty.
\end{align*}
Writing $\alpha=q^{\lambda-1}$ and using \eqref{zz2}, one obtains
$$
M(r; D_{\infty, q}f) \geqslant C r^{-\lambda} (-r; q)_\infty, \quad r\to \infty,
$$
which completes the proof.
\end{proof}

\begin{proof}[Proof of Theorem \ref{teoestinf1}]
By Taylor's Theorem, one can write
\begin{align*}
f(x)=T_m(x)+S_m(x)
\end{align*}
where $T_m(x)$ is a polynomial of degree at most $m$ and $S_m(x)=o(x^m)$ as $x \to 0^+$. Since $D_{\infty,q}$ maps a polynomial to a polynomial of the same degree (see \cite [Remark 3]{gupta}), there holds
\begin{align*}
(D_{\infty,q}f)(z)=P_m(z)+(D_{\infty,q}S_m)(z),
\end{align*}
where $P_m(z)$ is a polynomial of degree at most $m$ and, as such,
\begin{align*}
M(r;P_m)=o(r^{-\lambda}(-r;q)_{\infty}), \quad r \to \infty, 
\end{align*}
for all $\lambda>0$. As for $M(r;D_{\infty,q}S_m)$, it can be estimated by means of Corollary \ref{coroestinf} with $\alpha=m$.
\end{proof}

\end{document}